\documentclass[a4paper, 12pt]{amsart}

\usepackage{ucs}
\usepackage[utf8x]{inputenc}
\usepackage[ngerman, english]{babel}
\usepackage{amsfonts,amsmath, amssymb}
\usepackage{fancyhdr}
\usepackage[fixlanguage]{babelbib}
\usepackage{url}
\usepackage{hyperref}
\usepackage{color}
\usepackage{cite}
\usepackage{mathtools}

\theoremstyle{abstrace}

\newtheorem*{thm1}{Main Theorem}
\newtheorem{thm}{Theorem}[section]
\newtheorem{lem}[thm]{Lemma}
\newtheorem{prop}[thm]{Proposition}
\newtheorem{cor}[thm]{Corollary}

\theoremstyle{definition}

\theoremstyle{remark}
\newtheorem{rem}[thm]{Remark}

\newcommand{\Norm}[2]{\Vert #1 \Vert_{L^{#2}}}
\newcommand{\con}[1]{\nabla_{#1}}

\newcommand{\laplace}{\nabla^{\ast} \nabla}
\newcommand{\summei}{\sum_{i=1}^n}
\newcommand{\summeij}{\sum_{i,j=1}^n}
\newcommand{\eps}{\varepsilon}
\newcommand{\feps}{f_{\varepsilon}}
\newcommand{\onframe}{(e_1, \ldots , e_n)}
\newcommand{\smooth}{C^{\infty}(M)}

\DeclareMathOperator{\Ric}{Ric}
\DeclareMathOperator{\diam}{diam}
\DeclareMathOperator{\R}{R}
\DeclareMathOperator{\vol}{vol}

\begin{document}

\nocite{*}

\title{Estimates for eigensections of Riemannian vector bundles}

\author{Saskia Roos}
\address{Max-Planck-Institut für Mathematik, Vivatsgasse 7, 53111 Bonn, Germany}
\email{saroos@mpim-bonn.mpg.de}

\begin{abstract}
We derive a bound on the $L^{\infty}$-norm of the covariant derivative of Laplace eigensections on general Riemannian vector bundles depending on the diameter, the dimension, the Ricci curvature of the underlying manifold, and the curvature of the Riemannian vector bundle. Our result implies that eigensections with small eigenvalues are almost parallel.  
\end{abstract}

\maketitle

\section{Introduction}

There is a close relation between the topology of a closed Riemannian manifold and the spectrum of its Laplacian. For example, the Laplacian acting on 1-forms determines the first Betti number. By a classical theorem of Bochner the first Betti number of an $n$-dimensional connected compact Riemannian manifold with nonnegative Ricci curvature satisfies $b_1(M) \leq n$ with equality if and only if $M$ is isometric to a flat torus. It was proved in \cite{petersensprouse} that $n$-dimensional compact Riemannian manifolds with $n$ small Laplace eigenvalues are diffeomorphic to a nilmanifold. Ammann and Sprouse showed a similar result for Dirac eigenvalues on compact spin manifolds in \cite{ammann}. Both statements were consequences of upper bounds on the $L^{\infty}$-norm of $L^2$-eigensections and of their covariant derivatives on general Riemannian vector bundles. 
\newline

In this paper we refine the upper bound on the $L^{\infty}$-norm  of the covariant derivative of an $L^2$-eigensection. For this, we need the following upper bound on the $L^{\infty}$-norm of an eigensection which is proved by standard Moser iteration, see for instance \cite[Lemma 4]{ballmann}.

\begin{lem}\label{SInfinity}
Let $E$ be a Riemannian vector bundle over a closed Riemannian manifold of dimension $n \geq 3$, equipped with a metric connection $\nabla$ such that $\Ric_g \geq  -(n-1)K$ and $\diam(M) \leq d$ for some positive numbers $K$ and $d$. Assume that for an $L^2$-section $S$ there is a positive number $\lambda$ such that $\langle \laplace S, S \rangle \leq \lambda \vert S \vert^2$ holds pointwise. Then there is a positive constant $c = c(n, \sqrt{K}d)$ such that
\begin{align*}
\Norm{S}{\infty} \leq \exp \bigg(2 \sqrt{\lambda} c d  \frac{\sqrt{p}}{(2-p)(\sqrt{p} - \sqrt{2-p})} \bigg) \Norm{S}{2}
\end{align*}
for any $p \in \left(1,\frac{n}{n-1}\right]$.
\end{lem}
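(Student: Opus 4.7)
The strategy is to derive a scalar subsolution inequality for $u := |S|$ and then run the classical Moser iteration for weak subsolutions of an elliptic inequality.

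\textbf{Step 1: Reduction to a scalar subsolution.} Starting from the Bochner-type identity $\tfrac{1}{2}\laplace|S|^2 = \langle \laplace S, S\rangle - |\nabla S|^2$ and combining it with the hypothesis together with the Kato inequality $\bigl|\nabla|S|\bigr|\leq|\nabla S|$, I would show that $u$ is a weak subsolution of the scalar eigenvalue inequality $\laplace u \leq \lambda u$. To avoid issues on the zero set of $S$, the computation is first carried out on the regularization $\sqrt{|S|^2+\eps^2}$, and the inequality for $u$ is recovered by letting $\eps \to 0$.

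\textbf{Step 2: Testing.} For $r>1$, multiply the scalar subsolution inequality by $u^{r-1}$ and integrate by parts over the closed manifold (no cutoff is required). This produces
\[
\int_M \bigl|\nabla u^{r/2}\bigr|^2 \,\dvol \leq \frac{r^2\lambda}{4(r-1)}\int_M u^r \,\dvol .
\]

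\textbf{Step 3: Sobolev estimate and iteration.} On a closed $n$-manifold with $\Ric \geq -(n-1)K$ and $\diam \leq d$, one has a Sobolev inequality of the form $\Norm{f}{\frac{2n}{n-2}}^2 \leq c^2 d^2\bigl(\Norm{\nabla f}{2}^2 + d^{-2}\Norm{f}{2}^2\bigr)$ with a constant $c$ depending only on $n$ and $\sqrt{K}d$; this is where the geometric data enter. Applied to $f=u^{r/2}$ and combined with the previous display, it yields a one-step gain $\Norm{u}{qr}\leq \alpha(r,\lambda,c,d)^{1/r}\Norm{u}{r}$ with $q:=n/(n-2)$. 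Iterating with $r_k:=pq^k$, the series $\sum_k r_k^{-1}$ is geometric, so the telescoping product of one-step gains converges to give $\Norm{u}{\infty}\leq C\Norm{u}{p}$. A final Hölder interpolation, combined with the volume bound provided by the Ricci–diameter assumption, converts $\Norm{u}{p}$ into $\Norm{u}{2}$.

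\textbf{Main obstacle.} The delicate part is producing the exact form $\exp\bigl(2\sqrt{\lambda}\,cd\cdot\tfrac{\sqrt{p}}{(2-p)(\sqrt{p}-\sqrt{2-p})}\bigr)$ for the constant. The $\sqrt{\lambda}$-dependence (rather than a polynomial power of $\lambda$) requires balancing the $\lambda$-dependent and $d$-dependent contributions in $\alpha(r_k,\lambda,c,d)$ carefully across the infinite product, so that the sum $\sum_k r_k^{-1}\log \alpha(r_k,\lambda,c,d)$ produces a linear rather than polynomial dependence on $\sqrt{\lambda}$. The factors $(2-p)^{-1}$ and $(\sqrt{p}-\sqrt{2-p})^{-1}$ encode the singular behavior of the Moser constants as the starting exponent approaches $1$ and of the final interpolation, respectively; tracking these endpoint contributions cleanly through the infinite iteration is where the main technical bookkeeping lies.
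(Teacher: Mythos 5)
Your Steps 1--2 (the Kato/Bochner reduction to a scalar subsolution $\laplace u\leq\lambda u$ and the resulting gradient estimate for $u^{r/2}$) are correct and agree with the standard route to which the paper points. The gap is in Step 3, where you have misread the role of the parameter $p$, and with your choice of Sobolev inequality the stated constant cannot emerge.

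In the lemma, $p\in(1,n/(n-1)]$ is \emph{not} the starting exponent of the iteration; it is the free parameter in Gallot's Sobolev inequality (Theorem~\ref{Gallot} in the paper),
\begin{align*}
\Norm{f}{\frac{2p}{2-p}} \leq \Norm{f}{2} + \frac{2c}{2-p}\,d\,\Norm{df}{2},
\end{align*}
so the per-step exponent gain is the $p$-dependent $q=\tfrac{p}{2-p}$ and the Sobolev constant $\tfrac{2c}{2-p}d$ is precisely the source of the $(2-p)^{-1}$ in the bound. You instead fix the gain at $q=n/(n-2)$ with a $p$-independent constant and try to introduce $p$ by starting the iteration at $r_0=p$; this cannot generate the $(2-p)^{-1}$ factor. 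It also obscures where $\sqrt{p}-\sqrt{2-p}$ comes from: combining Gallot applied to $u^k$ with the bound $\Norm{du^k}{2}\leq\sqrt{k\lambda}\,\Norm{u^k}{2}$ from your Step 2 gives
\begin{align*}
\Norm{u}{2kq}\leq\Big(1+\tfrac{2cd}{2-p}\sqrt{k\lambda}\Big)^{1/k}\Norm{u}{2k}\leq\exp\!\Big(\tfrac{2cd}{2-p}\sqrt{\lambda/k}\Big)\Norm{u}{2k},
\end{align*}
and iterating $k=q^j$ from $k=1$ produces the geometric sum
\begin{align*}
\sum_{j\geq0}q^{-j/2}=\frac{\sqrt{q}}{\sqrt{q}-1}=\frac{\sqrt{p}}{\sqrt{p}-\sqrt{2-p}},
\end{align*}
which is exactly the remaining factor in the exponent. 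Note also that since the $L^p$-norms are volume-normalized, the iteration starts directly at $L^2$ and ends once the infinite product is evaluated; your closing ``H\"older interpolation plus volume bound'' step is unnecessary (for normalized norms $\Norm{u}{p}\leq\Norm{u}{2}$ when $p\leq 2$ by Jensen), and its appeal to a separate volume estimate indicates you were not using the normalized norms the lemma and the constant are written in.
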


Note that the $L^p$-norms for $1 \leq p \leq \infty$ are volume-normalized (see Section 2 for a definiton). The constant $c(n,\sqrt{K}d)$ is explicitly given in the Sobolev estimate proven by Gallot (see Theorem \ref{Gallot}).
\newline

The proofs of \cite[Theorem 1.1]{petersensprouse} and \cite[Theorem 1.1]{ammann} use an upper bound on the $L^{\infty}$-norm of the covariant derivative of an eigensection $S$. Since the corresponding Bochner formula contains the covariant derivative of the curvature $\nabla \R^E$ of the Riemannian vector bundle, the bounds on $\Norm{\nabla S}{\infty}$ derived in \cite[Theorem 4.2]{petersensprouse} and \cite[Lemma A.4]{ammann} also depend on a uniform upper bound for $\vert \nabla \R^E \vert$. In the proofs of \cite[Theorem 1.1]{petersensprouse} and \cite[Theorem 1.1]{ammann} the dependency on $\nabla \R^E$ is removed by changing the metric according to Abresch's Theorem \cite{Abresch} to obtain a uniform upper bound on $\nabla \R$. However, in \cite{ammannerratum} Ammann points out that one can not Abresch's Theorem. This is because it could happen that the upper bound on the $r$-th eigenvalue for the Laplacian which is derived for the deformed metric is smaller than the perturbation of the Laplace eigenvalues due to the metric change. But by choosing a closer metric the bound on $\nabla R$ becomes larger. Since the bound on $\Norm{\nabla S}{\infty}$ used in \cite{petersensprouse} and \cite{ammann} depends on $\nabla R$ the upper bound for the $r$-th Laplace eigenvalue for this deformed metric becomes smaller. 

Thus, the goal of this paper is to give an upper bound on the $L^{\infty}$-norm of the covariant derivative of an eigensection that only depends on the dimension, the diameter, the lower bound on the Ricci curvature of the manifold, and the curvature of the vector bundle. 

\begin{thm1}
Let $(M,g)$ be a closed Riemannian manifold such that $\Ric_g \geq - (n-1)K$ and $\diam (M) \leq d$ for some positive constants $K$ and $d$. Furthermore, let $E$ be a Riemannian vector bundle over $M$ equipped with a metric connection $\nabla$ such that $\vert \R^E \vert \leq r$ for some positive constant $r$. Then any section $S$ with $\laplace S = \lambda S $ for some positive $\lambda$ satisfies the following inequality:
\begin{equation*}
\Norm{\nabla S}{\infty} \leq \lambda^{\alpha(n)} \exp\left(A(n,K,d) \sqrt{2 \lambda}\right) B(n,K,d,r) \Norm{S}{2},
\end{equation*}
where $\alpha(n) = \frac{1}{2}$ if $\lambda \geq 1$ and $\alpha(n)=  \frac{\eps(n)n}{2n+4(1-\eps(n))}$ else. The positive constants are explicitly given by:
\begin{align*}
\eps(n) & \coloneqq \prod_{i=1}^{\infty}\left(1 - \left( \frac{n}{n+2} \right) ^ i \ \right) ,
\end{align*}
\newpage
\begin{align*}
A(n,K,d) & \coloneqq \frac{(n+2)(n+1)}{n\eps(n)} c(n,\sqrt{K}d) d , \\[7pt]
B(n,K,d,r) & \coloneqq \left( \frac{n+2}{n} \right)^{ \frac{(n + 2)^2}{4n\eps(n)} } \exp \bigg( \frac{(n+2)(n+1)}{n\eps(n)}c(n, \sqrt{K}d) d \\
& \qquad \qquad \qquad  \qquad \qquad  \cdot \sqrt{2( (n-1)K + n^2(r + r^2))} \bigg),
\end{align*}
and $c(n, \sqrt{K}d )$ is the Sobolev constant explicitly given in Theorem \ref{Gallot}.
If in addition $\Norm{\nabla S}{\infty} \geq  \Norm{S}{\infty}$ then $\alpha(n) = \frac{1}{2}$. 
\end{thm1}

\begin{rem}
If one knows a priori that $\Norm{\nabla S}{\infty} < \Norm{S}{\infty}$, then the constants $A$ and $B$ can be improved. This will be discussed in detail in the proof of the main theorem.
\end{rem}

This estimate is analogous to \cite[Theorem 2.2]{Colbois} with vanishing potential. However, there is a difference to our result. In our main theorem the Ricci curvature can have any lower bound whereas in \cite[Theorem 2.2]{Colbois} the $L^{\frac{p}{2}}$-norm of the negative part of the Ricci curvature has to be smaller than a constant depending on $n$, on chosen real numbers $p > q > n$ and on the diameter of the manifold, to get a uniform bound on the Sobolev constant. Using these estimates Aubry, Colbois, Ghanaat, and Ruh improved the spectral characterization of nilmanifolds started by Petersen and Sprouse. Furthermore, Ammann referred in his small erratum \cite{ammannerratum} to this estimate to fix the proof of \cite[Theorem 1.1]{ammann}. Nonetheless, the assumption on the Ricci curvature changes in comparison to the original theorems. In Section 3 we show how our main theorem can be used to prove \cite[Theorem 1.1]{petersensprouse} and \cite[Theorem 1.1]{ammann} in their original statement.
\newline

For the proof of our main theorem we follow closely the proof of \cite[Theorem 2]{ballmann}. There, Ballmann, Br\"uning, and Carron derived a bound on $\Norm{\nabla S}{\infty}$ which is independent of $\nabla \R$ but depends on the holonomy of the vector bundle $E$. Their motivation was to study the relation between eigenvalues and holonomy. For the main theorem, however, the holonomy estimate is replaced by setting $N = \max \lbrace \Norm{\nabla S}{\infty} , \Norm{S}{\infty} \rbrace$ which leads to a case analysis. The advantage of our main theorem is that it makes the dependency of the upper bound for $\Norm{\nabla S}{\infty}$ independent of the holonomy.

After the proof we explain how this estimate influences the proofs of \cite[Theorem 1.1]{petersensprouse} and \cite[Theorem 1.1]{ammann}. Then we show that our main theorem leads to a similar result as \cite[Theorem 2]{ballmann} which only depends polynomially on the holonomy instead of exponentially.

\subsection*{Acknowledgment}
These studies started in my master thesis and it is a great pleasure for me to thank my supervisors Werner Ballmann and Bernd Ammann for suggesting this problem to me and many helpful discussions. In addition, I would like to extend my special thanks to Christian Blohmann, Fabian Spiegel and Asma Hassannezhad for their advisory support. Furthermore I would like to thank Erwann Aubry for helpful comments. Moreover I gratefully acknowledge the support and hospitality of the Max Planck Institute for Mathematics in Bonn.

\section{Proof of the Main Theorem}

We follow the proof of \cite[Theorem 2]{ballmann} and emphasize the change which is done to avoid the dependency on the holonomy.

Throughout this paper, we use the volume-normalized $L^p$-norms. For $E$ and $M$ as in the assumptions of the main theorem, we define the volume normalized $L^p$-norm to be
\begin{align*}
\Norm{S}{p} = \left( \frac{1}{\vol(M)} \int_M \vert S \vert^p  \right)^{\frac{1}{p}}
\end{align*}
for $L^p$-sections $S$ of $E$.

The main tool of this proof is Moser iteration. For this, we need the following Sobolev estimate proven by Gallot, \cite{gallot}. 

\begin{thm}\label{Gallot}
Let $(M,g)$ be a closed Riemannian manifold of dimension $n \geq 3$, with $\Ric_g \geq -(n-1)K$ and $\diam (M) \leq d$ for some positive constants $K, d$. Then there is a positive constant $c = c(n, \sqrt{K} d)$ such that for all $p \in \left[ 1 , \frac{n}{n-1} \right]$ and $f \in \smooth$ we have the inequality
\begin{align*}
\Norm{f}{\frac{2p}{2-p}} \leq \Norm{f}{2} + \frac{2c}{2-p} \diam(M) \Norm{df}{2}.
\end{align*}
The constant $c$ is given by 
\begin{align*}
c(n, D) = \frac{1}{D} \int_0^{D} \left(\frac{1}{2}e^{(n-1)D} \cosh(t) + \frac{1}{nD} \sinh(t)\right)^{(n-1)} \mathrm{d}t ,
\end{align*}
where $D \coloneqq \sqrt{K}d$.
\end{thm}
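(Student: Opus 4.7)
The plan is to derive the Sobolev inequality from a Cheeger-type isoperimetric inequality on $(M,g)$ via the classical coarea/symmetrization technique of Federer--Fleming, with the isoperimetric inequality itself obtained from Bishop--Gromov volume comparison. This is the strategy developed by Gallot (and closely related to Gromov's proof of the L\'evy--Gromov isoperimetric inequality): the hypothesis $\Ric_g \geq -(n-1)K$ is exactly what is needed to compare volumes in $M$ against the simply connected model space of constant sectional curvature $-K$, and the diameter bound $\diam(M) \leq d$ then forces the comparison to terminate at scale $d$, giving all constants a clean dependence on the single scale-invariant parameter $D = \sqrt{K}d$.

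The first step is to establish an isoperimetric estimate of Cheeger type,
\begin{equation*}
|\partial A|_{n-1} \;\geq\; \frac{1}{d\, c(n,\sqrt{K}d)} \, \min\bigl\{|A|_n, |M \setminus A|_n\bigr\}
\end{equation*}
for smooth domains $A \subset M$. I would take a minimizing separating hypersurface $\Sigma = \partial A$, consider the two-sided normal exponential map off $\Sigma$, and use Bishop--Gromov-type bounds on the Jacobian of this map (i.e.\ on the Jacobi-field density along normal geodesics) to control $|A|_n$ and $|M \setminus A|_n$ in terms of $|\Sigma|_{n-1}$ times an integral of the model-space volume density. Because $\diam(M) \leq d$, the normal tubes exhaust $M$ within distance $d$, so the integral runs over $[0,D]$ and produces the specific integrand $\bigl(\tfrac{1}{2}e^{(n-1)D}\cosh(t) + \tfrac{1}{nD}\sinh(t)\bigr)^{n-1}$ stated in the theorem; this expression is a convenient upper envelope for the rescaled hyperbolic volume density $(\sinh t)^{n-1}$ that accommodates the worst case of how $\Sigma$ can sit inside $M$. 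The second step is the coarea reduction: for nonnegative $f \in \smooth$, writing $\int_M |df| = \int_0^\infty |\partial\{f>t\}|_{n-1}\,dt$ and applying the isoperimetric inequality to each superlevel set yields a weak Poincar\'e--Sobolev inequality of the form $\Norm{f-\bar f}{n/(n-1)} \leq 2cd\, \Norm{df}{1}$. Applying this weak inequality to $|f|^{\alpha}$ for a suitable exponent $\alpha$ and interpolating via H\"older's inequality produces the stated $L^{2p/(2-p)}$--$L^2$ form for any $p \in [1, n/(n-1)]$; the additive $\Norm{f}{2}$ on the right absorbs the mean of $f$, which the isoperimetric inequality alone cannot control.

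The main obstacle is carrying out the Bishop--Gromov comparison in a way that produces the clean scaling of the final constant through $D = \sqrt{K}d$ only: a crude application of volume comparison yields a constant in which $K$ and $d$ appear separately, and it requires the bisecting-hypersurface setup together with the correct rescaling of the model-space density to collapse the dependence to a single parameter. Once the isoperimetric inequality with the explicit constant $c(n,D)$ is established, the Federer--Fleming coarea step and the H\"older interpolation that extends the weak $L^{n/(n-1)}$--$L^1$ inequality across the full range $p \in [1, n/(n-1)]$ (producing the prefactor $2c\,\diam(M)/(2-p)$) are straightforward calculus-of-integrals manipulations.
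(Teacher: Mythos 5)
The paper does not prove this statement at all: it is quoted verbatim from Gallot's work (the citation \cite{gallot}) and used as a black box, so there is no in-paper argument to compare yours against. Measured against Gallot's actual proof, your outline follows essentially the same route: an isoperimetric inequality of Cheeger type obtained from a Heintze--Karcher/Bishop--Gromov comparison for the normal exponential map off a separating minimal hypersurface, truncated at distance $d$ by the diameter bound; then the Federer--Fleming coarea reduction to a weak $L^{n/(n-1)}$--$L^1$ inequality; then the substitution $u = |f|^{2/(2-p)}$ together with Cauchy--Schwarz and an elementary absorption argument to reach the stated $L^{2p/(2-p)}$--$L^2$ form with the factor $\tfrac{2c}{2-p}\,d$. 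The architecture is right, and your final interpolation step does work out exactly as you claim (one gets $X^{p/(2-p)}(X - Z) \leq Y^{2/(2-p)}$ with $X = \Norm{f}{2p/(2-p)}$, $Y = \Norm{f}{2}$, $Z = \tfrac{2cd}{2-p}\Norm{df}{2}$, whence $X \leq Y + Z$).

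The genuine gap is in the first step, which is also the entire quantitative content of the theorem: you assert, rather than derive, that the comparison produces exactly the integrand $\bigl(\tfrac{1}{2}e^{(n-1)D}\cosh t + \tfrac{1}{nD}\sinh t\bigr)^{n-1}$ and hence the constant $c(n,D)$. Calling it ``a convenient upper envelope for the rescaled hyperbolic volume density'' does not explain where the two summands come from (they encode, respectively, a bound on the mean curvature of the minimizing hypersurface and the normalization by $\vol(M)$ in the volume-normalized setting), and without that computation the theorem as stated --- with its explicit constant --- is not proved. Two further points you gloss over: the existence and regularity of the isoperimetric minimizer $\Sigma$ requires geometric measure theory (or a workaround via smooth test domains), and since all norms here are volume-normalized, the isoperimetric and Sobolev inequalities must be carried through in normalized form, which is precisely where the factor $\tfrac{1}{nD}$ in the integrand enters. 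As a roadmap to Gallot's proof your proposal is accurate; as a proof it is incomplete at its central step.
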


As we want to apply this theorem to obtain a Moser type iteration, we need to assume the Ricci and diameter bounds stated in the main theorem.

The strategy is to apply Theorem \ref{Gallot} to $\vert \nabla S \vert$. Since this is not a smooth function, we define $\feps \coloneqq \sqrt{\vert \nabla S \vert^2 + \eps^2}$ for small $\eps > 0$.

Using the identity $d \feps^k = k \feps^{k-1} d\feps$ we obtain for $k > \frac{1}{2}$
\begin{equation}\label{dfIdentity}
\begin{aligned}
\Norm{d\feps^k}{2}^2 & = k^2 \langle d\feps , \feps^{2k-2} d \feps \rangle_{L^2} \\
& = \frac{k^2}{2k-1} \langle d \feps , d\feps^{2k-1} \rangle_{L^2} \\
& = \frac{k^2}{2k-1} \langle \Delta \feps , \feps^{2k-1} \rangle_{L^2},
\end{aligned}
\end{equation}
where $\langle . , .\rangle_{L^2}$ denotes the induced volume-normalized $L^2$-inner product.

Let $\langle . , . \rangle$ denote the standard fibrewise inner product. Following the proof of \cite[Lemma A.4]{ammann} we have on the one hand
\begin{align*}
\Delta \feps^2 =2 \feps \Delta \feps - 2 \big\vert d \vert \feps \vert \big\vert^2.
\end{align*}
On the other hand we calculate
\begin{align*}
\Delta \feps^2 = \Delta(\vert \nabla S \vert^2 + \eps^2) &= 2\langle (\laplace ) \nabla S , \nabla S \rangle - 2 \vert \nabla \nabla S \vert^2 \\
& \leq 2\langle (\laplace ) \nabla S , \nabla S \rangle - 2 \big\vert d \vert \nabla S \vert \big\vert^2 ,
\end{align*}
where the last inequality follows from Kato's inequality (see for instance \cite{Kato}). As $\big\vert d \vert \feps \vert \big\vert \leq \big\vert d \vert \nabla S \vert \big\vert$ we obtain that
\begin{align*}
\feps \Delta \feps \leq \langle (\laplace) \nabla S , \nabla S \rangle.
\end{align*}

For a local vector field $Z$ and a local orthonormal frame $\onframe$ the Bochner formula yields
\begin{align*}
((\laplace) \nabla S) (Z) = & \con{Z}(\laplace S) - \con{\Ric(Z)}S -  2 \summei \R^E(e_i , Z) \con{e_i}S \\
&  - \summei (\con{e_i} \R^E)(e_i, Z)S.
\end{align*}
Using the assumptions on the manifold and the vector bundle we estimate
\begin{align*}
\feps \Delta \feps & \leq \langle (\laplace) \nabla S , \nabla S \rangle \\
 & \leq  (\lambda + (n-1)K + n^2 r) \vert \nabla S \vert^2 \\
& \ \ \ - \summeij \Big\langle \big( (\con{e_i} \R^E)(e_i,e_j) + \R^E(e_i, e_j)\con{e_i}S \big) ,\con{e_j}S \Big\rangle.
\end{align*}
Combining this with the identity \eqref{dfIdentity} we obtain
\begin{align*}
\Norm{d \feps^k}{2}^2 \leq & \  \frac{k^2}{2k-1} (\lambda + (n-1)K + n^2 r)) \Norm{\feps^k}{2}^2 \\
& - \frac{k^2}{2k-1} \ \, \frac{1}{\vol(M)} \int_M \summeij \langle \con{e_i} \big( \R^E(e_i, e_j)S \big), \con{e_j}S \rangle \feps^{2k-2}.
\end{align*}
For each $p \in M$ separately we choose $\onframe$ such that it is parallel at $p$. Using the divergence formula as in \cite[page 267]{divergenz}, we rewrite the integral as follows
\begin{equation}\label{integral}
\begin{aligned}
& -  \int_M \summeij \langle \con{e_i}\big(\R^E(e_i, e_j)S \big), \con{e_j}S \rangle \feps^{2k-2} \\
& \ \ \  \ \ \  = \int_M \summeij \langle \R^E(e_i, e_j)S, \con{e_i}\con{e_j} S \rangle \feps^{2k-2} \\
& \ \ \  \ \ \  \ \  \ +2(k-1)\int_M \feps^{2k-3} \summeij d \feps(e_i) \langle \R^E(e_i,e_j)S, \con{e_j}S \rangle.
\end{aligned}
\end{equation}
By our choice of the local orthonormal frame, we have pointwise
\begin{align*}
\summeij \langle \R^E(e_i, e_j)S, \con{e_i} \con{e_j}S \rangle = \frac{1}{2}\summeij \vert \R^E(e_i, e_j)S \vert^2.
\end{align*}
Therefore, we obtain
\begin{align*}
&  2(k-1)\int_M \feps^{2k-3} \summeij d \feps(e_i) \langle \R^E(e_i,e_j)S, \con{e_j}S \rangle.\\
& \ \ \ \ \ \ \leq \frac{n^2r^2}{2}\int_M \vert S \vert^2 \feps^{2k-2} + 2(k-1)nr \int_M \vert S \vert \feps^{2k-2} \vert d\feps \vert \\
& \ \ \ \ \ \ \leq \frac{n^2 r^2}{2} \Norm{S}{\infty}^2 \int_M \feps^{2k-2} + 2\frac{k-1}{k}nr \Norm{S}{\infty} \int_M \feps^{k-1}\vert d\feps^k \vert.
\end{align*}
We further estimate the second term of \eqref{integral}:
\begin{align*}
& 2\frac{k-1}{k}nr \Norm{S}{\infty} \int_M \feps^{k-1}\vert d\feps^k \vert \\
& \ \ \ \ \ \ \leq \frac{2k-1}{k^2} \Bigg[ \frac{1}{2} \int_M \vert d \feps^k \vert^2  + \left( \frac{k(k-1)}{2k-1} \right)^2 2 n^2 r^2 \Norm{S}{\infty}^2 \int_M \feps^{2k-2} \Bigg].
\end{align*}
Up to this point we just followed the proof of \cite[Theorem 2]{ballmann}. Now, instead of estimating $\Norm{S}{\infty}$ against $\Norm{\nabla S}{\infty}$ with the holonomy of the vector bundle we will just leave the last inequality as it is in order to avoid the dependency on the holonomy. This does not change much but will require a case analysis at the end.

Combining the above inequalities, we have for $k > \frac{1}{2}$
\begin{align*}
\Norm{d \feps^k}{2}^2 & \leq  \frac{k^2}{2k-1} (\lambda + (n -1)K+n^2r)\Norm{\feps^k}{2}^2  \\
& \ \ \ - \frac{k^2}{2k-1}\frac{1}{\vol(M)} \int_M \summeij \langle \con{e_i}\big( \R^E(e_i,e_j)S \big),\con{e_j}S\rangle \feps^{2k-2}\\
& \leq  \frac{k^2}{2k-1} \bigg[ (\lambda + (n-1)K +n^2r) \Norm{\feps^k}{2}^2 + \frac{n^2r^2}{2}\Norm{S}{\infty}\Norm{\feps^{k-1}}{2}^2 \bigg] \\
& \  \ \  + \frac{1}{2} \Norm{d\feps^k}{2}^2 + \left( \frac{k(k-1)}{2k-1}\right)^2 2n^2r^2 \Norm{S}{\infty}^2 \Norm{\feps^{k-1}}{2}^2 \\
& \leq  k^2 \big(\lambda + (n-1)K+n^2r \big) \Norm{\feps^k}{2}^2 \\
& \ \ \ + k^2n^2r^2\left( \frac{1}{2} + \frac{2(k-1)^2}{(2k-1)^2} \right) \Norm{S}{\infty}\Norm{\feps^{k-1}}{2}^2 + \frac{1}{2}\Norm{d \feps^k}{2}^2.
\end{align*}
Since
\begin{align*}
\frac{2(k-1)^2}{(2k-1)^2} \leq \frac{1}{2}
\end{align*}
for $k \geq \frac{3}{4}$, we arrive at 
\begin{align*}
\Norm{d\feps^k}{2}^2 \leq 2k^2(\lambda + (n-1)K+n^2r)\Norm{\feps^k}{2}^2 + 2k^2n^2r^2\Norm{S}{\infty}^2\Norm{\feps^{k-1}}{2}^2.
\end{align*}
Using Hölder's inequality we find
\begin{align*}
\Norm{\feps^{k-1}}{2}^2 \leq \Norm{\feps}{2k}^{2k-2},
\end{align*}
for the volume-normalized $L^p$-norms. 

Set $G^2 \coloneqq 2(\lambda +(n-1)K+n^2(r+r^2))$ and $N_{\eps} \coloneqq \max\lbrace\Norm{\feps}{\infty} , \Norm{S}{\infty} \rbrace$. Then,
\begin{align*}
\Norm{d \feps^k}{2}^2 \leq k^2 G^2 N_{\eps}^2 \Norm{\feps}{2k}^{2k-2}.
\end{align*}
Taking the square root on both sides yields
\begin{align*}
\Norm{d \feps^k}{2} \leq k G N_{\eps} \Norm{\feps}{2k}^{k-1}.
\end{align*}

Now we apply Theorem \ref{Gallot} with $p = \frac{n+1}{n+2}$, thus $q = \frac{n+2}{n}$, and find a constant $C = C(n, \sqrt{K}d) = \frac{2n+2}{n} c(n, \sqrt{K}d) d$ such that
\begin{align*}
\Norm{\feps}{2kq}^k = \Norm{\feps^k}{2q} & \leq \Norm{\feps^k}{2} + C \Norm{d \feps^k}{2} \\
& \leq (1+ kCG) N_{\eps} \Norm{\feps}{2k}^{k-1}.
\end{align*}
Letting $\eps$ go to $0$ we obtain
\begin{equation}\label{Iteration}
\Norm{\nabla S}{2kq} \leq \left( 1 + kCG\right)^{\frac{1}{k}} N^{\frac{1}{k}}  \Norm{\nabla S}{2k}^{\frac{k-1}{k}},
\end{equation}
where $N \coloneqq \max\lbrace\Norm{\nabla S}{\infty} , \Norm{S}{\infty} \rbrace$.

Set $k \coloneqq q^j$ for $j = 1, 2, \ldots$ and $p_j \coloneqq 1 - \frac{1}{q^j}$. Then iterating the last inequality leads to
\begin{align*}
\Norm{\nabla S}{2q^{j+1}} & \leq (1 + q^j CG)^{\frac{1}{q^j}} N^{1- p_j} \Norm{\nabla S}{2q^j}^{p_j} \\
& \leq \prod_{i=1}^j(1+q^iCG)^{\frac{p_{i+1} \cdots p_j}{q^i}} N^{1-p_j \cdots p_i} \Norm{\nabla S}{2q}^{p_1 \cdots p_j} \\
& \leq \prod_{i=1}^j(1 + q^iCG)^{\frac{1}{q^i}} N^{1-p_j \cdots p_i} \Norm{\nabla S}{2q}^{p_1 \cdots p_j}.
\end{align*}
In the last step we used that $x^p \leq x$ for $x \geq 1$ and $p \in (0,1]$. We stop the iteration process at this point, since otherwise there would be no dependency on $\Norm{\nabla S}{2}$ anymore as inequality \eqref{Iteration} with $k=1$ shows.

Now we want to consider the limit $j \rightarrow \infty$. For this we need to check, whether the product on the right hand side converge. Using the geometric series, we first note that the limit
\begin{align*}
\eps = \eps(n) \coloneqq \prod_{i=1}^{\infty} p_i
\end{align*}
exists and lies in $\left( e^{-\frac{n^2 + 2n}{4}}, e^{-\frac{n}{2}} \right)$. Next, we estimate
\begin{align*}
 \prod_{i=1}^{\infty}(1 + q^iCG)^{\frac{1}{q^i}} & \leq (1 + CG)^{\sum_{i=1}^{\infty} \frac{1}{q^i}} \ q^{\sum_{i=1}^{\infty} \frac{i}{q^i}} \\
& \leq  \exp \left( C G \sum_{i=1}^{\infty} \frac{1}{q^i} \right) \ q^{\sum_{i=1}^{\infty} \frac{i}{q^i}} \coloneqq P.
\end{align*}
Recall that $q = \frac{n+2}{n}$. A small computation yields
\begin{align*}
P = \Bigg[ \exp\left(\frac{n}{2} CG\right) \Bigg] \left( \frac{n+2}{n} \right)^{ \frac{n^2 + 2n}{4} } < \infty .
\end{align*}
Hence, 
\begin{equation}\label{inequalityA}
\Norm{\nabla S}{\infty} \leq P N^{1- \eps} \Norm{\nabla S}{2q}^{\eps}.
\end{equation}

Since $N = \max \lbrace \Norm{\nabla S}{\infty} , \Norm{S}{\infty} \rbrace$ we have to distinguish two cases. First assume that $N = \Norm{\nabla S}{\infty}$. Then inequality \eqref{inequalityA} reads 
\begin{align*}
\Norm{\nabla S}{\infty} \leq P \Norm{\nabla S}{\infty}^{1- \eps} \Norm{\nabla S}{2q}^{\eps}
\end{align*}
and we can continue as in the proof of Theorem 2 in \cite{ballmann}. The latter inequality is equivalent to
\begin{align*}
\Norm{\nabla S}{\infty} \leq P^{\frac{1}{\eps}} \Norm{\nabla S}{2q}.
\end{align*}
Since $q = \frac{n+2}{n}$ H\"older's inequality leads to
\begin{equation}\label{Holder}
\Norm{\nabla S}{2\frac{n+2}{n}} \leq \Norm{\nabla S}{2}^{\frac{n}{n+2}} \Norm{\nabla S}{\infty}^{\frac{2}{n+2}}
\end{equation}
Using this and that $\laplace S = \lambda S$, we obtain
\begin{align*}
\Norm{\nabla S}{\infty} \leq P^{\frac{n+2}{n\eps}} \sqrt{\lambda} \Norm{S}{2}.
\end{align*}
The constant $P$ still depends on $\lambda$. In order to determine this dependency and to compare the bounds of the different cases we calculate:
\begin{align*}
P^{\frac{n+2}{n\eps}} &= \left( \frac{n+2}{n} \right)^{ \frac{(n + 2)^2}{4n\eps} } \exp\left(\frac{n+2}{2\eps} C G \right)\\
& = a(n) \exp \Bigg( \frac{(n+2)(n+1)}{n\eps}c(n, \sqrt{K}d) d \\ 
& \qquad \qquad \quad \quad  \cdot \sqrt{2(\lambda + (n-1)K + n^2r + n^2r^2)} \Bigg) \\
&\leq a(n) \exp\big(b(n,K,d,r) \big) \exp\left(\frac{(n+2)(n+1)}{n\eps} c(n,\sqrt{K}d) d \sqrt{2\lambda} \right) \\
&\eqqcolon B_1(n,K,d,r) \exp\left( A_1(n,K,d) \sqrt{2 \lambda} \right),
\end{align*}
where we define
\begin{align*}
a(n) & \coloneqq \left( \frac{n+2}{n} \right)^{ \frac{(n + 2)^2}{4n\eps} } , \\[7pt]
b(n,K,d,r) & \coloneqq \frac{(n+2)(n+1)}{n\eps}c(n, \sqrt{K}d) d \\
& \quad \ \cdot \sqrt{2( (n-1)K + n^2(r + r^2))} ,\\[7pt]
B_1(n,K,d,r) & \coloneqq a(n) \exp(b(n,K,d,r)),\\[7pt]
A_1(n,K,d) & \coloneqq \frac{(n+2)(n+1)}{n\eps} c(n,\sqrt{K}d) d.
\end{align*}
Thus in the case $N = \Norm{\nabla S}{\infty}$ we finally arrive  at the inequality
\begin{align*}
\Norm{\nabla S}{\infty}\leq \sqrt{\lambda} \, \exp\left(A_1(n,K,d)\sqrt{2\lambda} \right)B_1(n,K,d,r)\Norm{S}{2}.
\end{align*}

We now turn to the second case, assuming that $N = \Norm{S}{\infty}$. Then inequality \eqref{inequalityA} reads 
\begin{align*}
\Norm{\nabla S}{\infty} \leq P \Norm{S}{\infty}^{1- \eps} \, \Norm{\nabla S}{2q}^{\eps}.
\end{align*}
Using inequality \eqref{Holder}, this is equivalent to
\begin{align*}
\Norm{\nabla S}{\infty} \leq P^{\frac{n+2}{n+2(1-\eps)}} \Norm{S}{\infty}^{\frac{(1-\eps)(n+2)}{n+2(1-\eps)}}\Norm{\nabla S}{2}^{\frac{\eps n}{n+2(1-\eps))}}.
\end{align*}
As $\laplace S = \lambda S$ we obtain
\begin{equation}\label{inequalityRaw}
\Norm{\nabla S}{\infty} \leq P^{\frac{n+2}{n+2(1-\eps)}} \lambda^{\frac{\eps n}{2n+4(1-\eps)}} \Norm{S}{2}^{\frac{\eps n}{n+2(1-\eps))}} \Norm{S}{\infty}^{\frac{(1-\eps)(n+2)}{n+2(1-\eps)}}.
\end{equation}
Applying Lemma \ref{SInfinity} with $p= \frac{n+2}{n+1}$, we find
\begin{equation}\label{inequalitySInfinite}
\Norm{S}{\infty} \leq \exp \left( \sqrt{2} c(n, \sqrt{K}d) d  \frac{(n+1)\sqrt{n+2}}{n(\sqrt{n+2} - \sqrt{n})} \,  \sqrt{2\lambda} \right) \Norm{S}{2}.
\end{equation}
For an explicit expression for $P$ in \eqref{inequalityRaw} we obtain, using the same notation as before,
\begin{equation}\label{explicitP2}
\begin{aligned}
P^{\frac{n+2}{n+2(1-\eps)}} & \leq a(n)^{\frac{\eps n}{n+ 2(1-\eps)}}\exp\left(\frac{\eps n}{n+2(1-\eps)} b(n,K,d,r)\right)\\
& \ \ \ \exp\left(\frac{(n+2)(n+1)}{n+2(1-\eps)} c(n,\sqrt{K}d)d \, \sqrt{2 \lambda} \right) \\
& \eqqcolon B_2(n,K,d,r) \exp\left(\frac{(n+2)(n+1)}{n+2(1-\eps)} c(n,\sqrt{K}d)d \, \sqrt{2 \lambda} \right),
\end{aligned}
\end{equation}
where we set
\begin{align*}
B_2 \coloneqq a(n)^{\frac{\eps n}{n+ 2(1-\eps)}}\exp\left(\frac{\eps n}{n+2(1-\eps)} b(n,K,d,r)\right).
\end{align*}
However we still need the analogous calculation for the constant $A_1$. We insert \eqref{inequalitySInfinite} and \eqref{explicitP2} in  \eqref{inequalityRaw}:
\begin{align*}
\Norm{\nabla S}{\infty} & \leq  B_2(n,K,d,r) \exp\left(\frac{(n+2)(n+1)}{n+2(1-\eps)} c(n,\sqrt{K}d)d \, \sqrt{2 \lambda} \right) \\
& \ \ \, \cdot \exp \left( \frac{(1-\eps)(n+2)}{n+2(1-\eps)} \sqrt{2} c(n, \sqrt{K}d) d  \frac{(n+1)\sqrt{n+2}}{n(\sqrt{n+2} - \sqrt{n})} \,  \sqrt{2\lambda} \right)  \\
& \ \ \, \cdot \lambda^{\frac{\eps n}{2n+4(1-\eps)}} \Norm{S}{2}^{\frac{(1-\eps)(n+2)}{n+2(1-\eps)}} \Norm{S}{2}^{\frac{\eps n}{n+2(1-\eps))}}.
\end{align*}
Considering the combination of the exponential factors containing $\sqrt{\lambda}$, we obtain
\begin{align*}
& \ \ \ \ \exp\Bigg[ c(n,\sqrt{K}d)d  \, \frac{(n+2)(n+1)}{n+2(1-\eps)}\left( 1 + \frac{\sqrt{2}(1-\eps)\sqrt{n+2}}{n(\sqrt{n+2}-\sqrt{n})} \right) \sqrt{2\lambda} \, \Bigg]\\
&\eqqcolon \exp\left(A_2(n,K,d) \sqrt{2 \lambda} \right).
\end{align*}
Hence, we arrive at
\begin{align*}
\Norm{\nabla S}{\infty} \leq \lambda^{\frac{n \eps}{2n + 4(1-\eps)}} \exp\left( A_2(n,K,d) \sqrt{2 \lambda}\right) B_2(n,K,d,r) \Norm{S}{2}.
\end{align*}

For an estimate that holds in both cases, we need to compare $A_1$ with $A_2$ and $B_1$ with $B_2$. Comparing $B_1$ with $B_2$ we note that
\begin{align*}
B_1 = B_2^{\frac{n+2(1-\eps)}{\eps n}}.
\end{align*}
As $B_2$ and $\frac{n+2(1-\eps)}{\eps n}$ are both larger than $1$ we conclude that $B_1 > B_2$. Straight-forward calculations and using that $\eps \in \left( e^{-\frac{n^2 + 2n}{4}}, e^{-\frac{n}{2}} \right)$ we conclude $A_1 > A_2$. It remains to compare the $\lambda$-factors, for which we only need to observe that
\begin{align*}
\lambda^{\frac{\eps n}{2n + 4(1-\eps)}} \leq \lambda^{\frac{1}{2}}
\end{align*}
iff $\lambda \geq 1$. Otherwise, the inequality goes in the other direction. 

Finally, set $A \coloneqq A_1$, $B \coloneqq B_1$ and define $\alpha(n)$ to be $\frac{1}{2}$ if $\lambda \geq 1$ and to be $\frac{\eps n}{2n + 4(1-\eps)}$ if $\lambda < 1$.

\section{Applications}

As indicated in the introduction our main theorem has various applications. 
The first application is that a family of $L^2$-normalized eigensections on a Riemannian vector bundle is an almost pointwise orthonormal family if the corresponding eigenvalues are sufficiently small. To the author's knowledge this was first proved in \cite[Theorem 4.3]{petersensprouse} with a dependency on the divergence of the curvature. Another proof of this result can be found in \cite[Theorem 2.2]{ammann}.

In order to ease notation, we denote with $\tau (\eps \mid x_1, \ldots , x_n)$ a positive continuous function in all its variables which vanishes as $\eps$ does so while the other variables are fixed.

Using our main theorem together with Lemma \ref{SInfinity}, one can apply the same proof as in \cite[Theorem 4.3]{petersensprouse} or \cite[Theorem 2.2]{ammann} to derive the following:

\begin{prop}\label{nearlyON}
Let $E$ be a Riemannian vector bundle over a Riemannian manifold $M$ as in the main theorem equipped with a metric connection $\nabla$ such that $\vert \R^E \vert \leq r$ . Let $\lambda_i$ denote the $i$-th eigenvalue of $\laplace$ counted with multiplicities. Let $S_i$ and $S_j$ be $L^2$-normalized eigensections with eigenvalue $\lambda_i$, $\lambda_j$ respectively . Then
\begin{align*}
\Norm{\langle S_i , S_j \rangle - \delta_{ij}}{\infty} \leq \tau(\max \lbrace \lambda_i, \lambda_j \rbrace \mid n,K,d, r).
\end{align*}
\end{prop}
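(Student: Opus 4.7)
The plan is to combine the pointwise bounds furnished by the Main Theorem and Lemma \ref{SInfinity} with a standard diameter/mean-value argument applied to the scalar function $f_{ij} \coloneqq \langle S_i, S_j \rangle - \delta_{ij}$ on $M$. The key observation is that $f_{ij}$ has vanishing $L^1$-mean and a differential that is pointwise controlled by quantities tending to $0$ with the eigenvalues.

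First, I would apply Lemma \ref{SInfinity} (e.g.\ with $p = \frac{n+2}{n+1}$, as in the proof of the Main Theorem) to each $L^2$-normalized eigensection $S_i$, obtaining a bound of the form
\begin{equation*}
\Norm{S_i}{\infty} \leq \exp\bigl(C_1(n,K,d)\sqrt{\lambda_i}\bigr),
\end{equation*}
which stays bounded as $\lambda_i \to 0$. Then, applying the Main Theorem with $\Norm{S_i}{2}=1$ yields
\begin{equation*}
\Norm{\nabla S_i}{\infty} \leq \lambda_i^{\alpha(n)} \exp\bigl(A(n,K,d)\sqrt{2\lambda_i}\bigr) B(n,K,d,r),
\end{equation*}
and likewise for $S_j$. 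The crucial point is the prefactor $\lambda_i^{\alpha(n)}$ with $\alpha(n)>0$, which forces $\Norm{\nabla S_i}{\infty} \to 0$ as $\lambda_i \to 0$.

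Next, I use that the metric connection $\nabla$ is compatible with the fibrewise inner product, so
\begin{equation*}
d\langle S_i, S_j \rangle = \langle \nabla S_i, S_j \rangle + \langle S_i, \nabla S_j \rangle,
\end{equation*}
whence $\bigl| d f_{ij} \bigr| \leq \Norm{\nabla S_i}{\infty}\Norm{S_j}{\infty} + \Norm{S_i}{\infty}\Norm{\nabla S_j}{\infty}$ pointwise. The right-hand side tends to $0$ as $\max\{\lambda_i,\lambda_j\}\to 0$ thanks to the two bounds above.

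Finally, I verify that $f_{ij}$ has vanishing mean with respect to the volume-normalized measure: for $i=j$ this is the $L^2$-normalization $\Norm{S_i}{2}^2 = 1$, and for $i\neq j$ it is the $L^2$-orthogonality of the $S_i$, which follows either from self-adjointness of $\laplace$ when $\lambda_i\neq\lambda_j$ or from the choice of an orthonormal basis inside each eigenspace when $\lambda_i=\lambda_j$. By continuity of $f_{ij}$ on the connected compact manifold $M$, there is a point $p_0 \in M$ with $f_{ij}(p_0)=0$; for any $x \in M$ a minimizing geodesic from $p_0$ to $x$ has length at most $d$, so integrating along it gives $|f_{ij}(x)| \leq d \Norm{d f_{ij}}{\infty}$. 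Combining everything,
\begin{equation*}
\Norm{\langle S_i, S_j\rangle - \delta_{ij}}{\infty} \leq 2 d \max\{\lambda_i,\lambda_j\}^{\alpha(n)} \exp\bigl(A\sqrt{2\max\{\lambda_i,\lambda_j\}}\bigr) B \cdot \exp\bigl(C_1\sqrt{\max\{\lambda_i,\lambda_j\}}\bigr),
\end{equation*}
and the right-hand side is a valid $\taulambda$-function. There is no serious conceptual obstacle; the only modest annoyance is the bookkeeping needed to package the explicit constants from the Main Theorem and Lemma \ref{SInfinity} into the $\tau$-notation, which is why Ammann and Petersen--Sprouse simply invoke their analogous statements.
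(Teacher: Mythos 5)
Your proposal is correct and takes essentially the same route the paper intends: the paper itself gives no self-contained proof but invokes the analogous arguments in Petersen--Sprouse and Ammann, which, as you do, combine the $\Norm{\nabla S_i}{\infty}$ bound from the Main Theorem (whose $\lambda_i^{\alpha(n)}$ prefactor is what makes the quantity vanish) with the $\Norm{S_i}{\infty}$ bound from Lemma \ref{SInfinity}, metric compatibility of $\nabla$, and the mean-zero--plus--diameter argument for $\langle S_i,S_j\rangle - \delta_{ij}$.
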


\begin{rem}
A similar statement was also used in \cite[Lemma 3.5]{Colding} to derive an upper bound on the Gromov-Hausdorff distant between balls in the euclidean space and balls on manifolds with a lower Ricci bound and bounded diameter.
\end{rem}

Assume now that $E$ has rank $k$. Then this proposition and the Gram-Schmidt process immediately leads to the following:
\begin{cor}\label{EigenvalueONframe}
There is a positive $\eps = \eps(n,K,d,r, k)$ such that $\lambda_k \leq \eps$ implies that $E$ is trivial. Furthermore, if $S_1, \ldots, S_k$ denote the first $k$ $L^2$-normalized eigensections, than we find a global orthonormal frame $\lbrace e_1, \ldots , e_k \rbrace$ such that
\begin{align*}
\Norm{e_i - S_i}{\infty} \leq \tau( \eps \mid n,K,d, r)
\end{align*}
for all $1 \leq i \leq k$. In particular, $\Norm{\nabla e_i}{\infty} \leq \tau( \eps \mid n,K,d, r)$.
\end{cor}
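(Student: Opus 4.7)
The plan is to combine Proposition \ref{nearlyON} with the pointwise Gram-Schmidt process, tracking errors in the $\tau$-notation. First, Proposition \ref{nearlyON} applied to any pair $S_i, S_j$ with $i, j \leq k$ (whose eigenvalues satisfy $\lambda_i, \lambda_j \leq \lambda_k \leq \eps$) yields $\Norm{\langle S_i, S_j \rangle - \delta_{ij}}{\infty} \leq \tau(\eps \mid n, K, d, r)$. For $\eps$ small enough in terms of $k$, the pointwise Gram matrix $\big(\langle S_i(p), S_j(p)\rangle\big)_{i,j}$ is within operator-norm distance $<1/k$ of the identity at every $p \in M$, hence invertible, so $S_1(p), \ldots, S_k(p)$ are linearly independent. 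Because $E$ has rank $k$, these sections form a global frame and $E$ is trivial.

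Next, I would apply Gram-Schmidt pointwise to this frame: set $e_1 \coloneqq S_1 / \vert S_1 \vert$ and inductively
\[
\tilde e_i \coloneqq S_i - \sum_{j=1}^{i-1}\langle S_i, e_j \rangle\, e_j, \qquad e_i \coloneqq \tilde e_i / \vert \tilde e_i \vert.
\]
An induction on $i$ then gives $\Norm{e_i - S_i}{\infty} \leq \tau(\eps \mid n, K, d, r)$, absorbing a $k$-dependent multiplicative constant into $\tau$ at each step. In the base case, $\big\vert \vert S_1 \vert^2 - 1 \big\vert \leq \tau$ implies $\big\vert \vert S_1 \vert - 1 \big\vert \leq \tau$ and hence $\Norm{e_1 - S_1}{\infty} \leq \tau$. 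For the inductive step, the hypothesis combined with Proposition \ref{nearlyON} forces each $\langle S_i, e_j\rangle$ (for $j < i$) to be of size $\tau$, so $\tilde e_i$ is $\tau$-close to $S_i$ and $\vert \tilde e_i \vert$ is $\tau$-close to $1$; dividing yields the desired bound on $e_i$.

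For the last assertion, I would invoke the Main Theorem with $\lambda = \lambda_i \leq \eps < 1$: since $\alpha(n) > 0$ in this regime and $\Norm{S_i}{2} = 1$, it gives $\Norm{\nabla S_i}{\infty} \leq \tau(\eps \mid n, K, d, r)$ for every $i \leq k$. Each $e_i$ is a smooth algebraic expression in $S_1, \ldots, S_i$ built from pointwise inner products and reciprocals of norms, and under the pointwise near-orthonormality established above the partial derivatives of this expression with respect to its arguments are uniformly bounded. Differentiating the Gram-Schmidt formulae covariantly then produces a bounded linear combination of terms of the form $\nabla S_j$ or $\langle \nabla S_j, S_\ell\rangle$, each of size $\tau$; hence $\Norm{\nabla e_i}{\infty} \leq \tau(\eps \mid n, K, d, r)$.

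The main obstacle is purely bookkeeping: one must keep the $\tau$-estimates consistent across the $k$ inductive Gram-Schmidt steps and the subsequent differentiations. This is tedious but routine, since $k$ is fixed as part of the data, so each step at worst absorbs a $k$-dependent multiplicative factor that can be hidden in the $\tau$-notation.
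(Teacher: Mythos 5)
The paper offers no explicit proof of this corollary — it is asserted to follow "immediately" from Proposition \ref{nearlyON} and Gram--Schmidt — and your write-up is precisely the argument the paper has in mind, filled out in detail. You correctly use the near-orthonormality to show the pointwise Gram matrix is invertible (giving a global frame and triviality of $E$), run Gram--Schmidt pointwise with an induction tracking $\tau$-errors for $\Norm{e_i - S_i}{\infty}$, and for the final bound $\Norm{\nabla e_i}{\infty} \leq \tau$ you correctly observe that the Main Theorem with $\lambda_i \leq \eps < 1$ and $\alpha(n) > 0$ gives $\Norm{\nabla S_i}{\infty} \leq \tau$, after which differentiating the Gram--Schmidt formulae (whose coefficients and their derivatives are uniformly controlled in the near-orthonormal regime) closes the argument. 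One small remark: you are actually a bit more careful than the paper, since you note that the multiplicative constants in the Gram--Schmidt induction depend on $k$; the paper writes $\tau(\eps \mid n,K,d,r)$ with the $k$-dependence implicitly absorbed into the choice of $\eps = \eps(n,K,d,r,k)$, which is consistent with your bookkeeping.
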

This corollary and the above proposition also imply directly that there is a positive lower bound $c_1(n,K,d,r,k)$ on the $(k+1)$-th eigenvalue.

Now, as outlined in the introduction, \cite{petersensprouse} and \cite{ammann} used similar estimates as in the main theorem and Proposition \ref{nearlyON}, with bounds depending on $\vert \nabla \R \vert$ to prove that enough small eigenvalues imply that a Riemannian resp.\ spin manifold is diffeomorphic to a nilmanifold. Both applied Abresch's Theorem (see \cite{Abresch} or Theorem 1.12 in \cite{CheegerFukuyaGromov}) to vary the metric such that $\vert \nabla \R \vert$ is uniformly bounded. However, as it is pointed out in the small erratum \cite{ammannerratum}, this does not work as $\vert \nabla \R \vert$ might grow when the change of the metric gets small. Using the bound given by our main theorem we avoid this problem.

As for Theorem 1.1 in \cite{ammann} we can apply the same proof by just exchanging the bound on $\Norm{\nabla S}{\infty}$ for an $L^2$-eigensection $S$ with the bound given in our main theorem and replacing \cite[Theorem 2.2]{ammann} by Proposition \ref{nearlyON}.

Petersen and Sprouse used an upper bound on $\Norm{\nabla \nabla S}{2}$ for an $L^2$-eigensection $S$ depending on $\vert \nabla \R \vert$ to prove Theorem 1.1 in \cite{petersensprouse}. As we were unable to bound $\Norm{\nabla \nabla S}{2}$ appropriately without assuming an upper bound on $\vert \nabla \R \vert$ we give another proof \cite[Theorem 1.1]{petersensprouse} using the same strategy as in \cite[Theorem 1.1]{ammann}.

\begin{thm}
Let $(M,g)$ be a closed $n$-dimensional Riemannian manifold such that $\diam(M) \leq d$ and $\vert \sec \vert \leq  K$. Then there is a positive $\eps = \eps(n, K, d)$ such that $\lambda_n (\laplace) \leq \eps$ implies that $M$ is a nilmanifold.
\end{thm}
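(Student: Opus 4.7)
The plan is to follow the strategy used in the proof of \cite[Theorem 1.1]{ammann}, substituting the connection Laplacian on the cotangent bundle $T^{\ast}M$ for the Dirac operator on the spinor bundle. This choice is natural: the hypothesis $\vert \sec \vert \leq K$ yields both $\Ric_g \geq -(n-1)K$ and an algebraic bound $\vert \R^E \vert \leq r(n,K)$ for the curvature of the connection induced on $E \coloneqq T^{\ast}M$ by Levi--Civita, so that the constants appearing in the Main Theorem and in Proposition~\ref{nearlyON} depend only on $n$, $K$, and $d$.

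First I would assume $\lambda_n(\laplace) \leq \eps$ and let $S_1, \ldots, S_n$ be $L^2$-normalized eigensections with eigenvalues $\lambda_i \leq \eps$. The Main Theorem then gives $\Norm{\nabla S_i}{\infty} \leq \tau(\eps \mid n, K, d)$. Applying Proposition~\ref{nearlyON} pairwise, the $S_i$ are almost pointwise orthonormal, and Corollary~\ref{EigenvalueONframe} (together with the Gram--Schmidt procedure) produces a global orthonormal coframe $e_1, \ldots, e_n$ of $T^{\ast}M$ satisfying
\begin{align*}
\Norm{e_i - S_i}{\infty} + \Norm{\nabla e_i}{\infty} \leq \tau(\eps \mid n, K, d).
\end{align*}
In particular $M$ is parallelizable, and reading off the Cartan structure equations $de_i = -\tfrac{1}{2}\, c^{i}_{jk}\, e^{j} \wedge e^{k}$ from the almost parallel frame yields structure coefficients whose $C^{0}$-norms are bounded by the same~$\tau$.

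The final step, which I expect to be the main obstacle, is to upgrade this almost parallel coframe to a genuine nilmanifold structure on $M$. Here I would transplant the construction from the final part of \cite{ammann}: lift the coframe to the universal cover $\widetilde{M}$, use a developing-map argument to map into a simply-connected nilpotent Lie group $G$ (whose Lie algebra is governed by the limiting structure coefficients), identify the deck transformation group with a cocompact lattice $\Gamma \subset G$, and conclude that $M$ is diffeomorphic to the nilmanifold $\Gamma \backslash G$. Parallelizability rules out a genuinely infranil quotient. The delicate ingredient in this step is convergence/compactness for the developing map, but it is worked out in \cite{ammann}; the only piece previously missing was a bound on $\Norm{\nabla S_i}{\infty}$ independent of $\vert \nabla \R \vert$, and that bound is now supplied by the Main Theorem, closing precisely the gap identified in \cite{ammannerratum}.
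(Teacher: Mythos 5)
Your overall strategy — obtain an almost parallel orthonormal frame on $TM$ (or $T^{\ast}M$) from Corollary~\ref{EigenvalueONframe}, then conclude that $M$ is a nilmanifold — is exactly the route the paper takes, and the observation that $\vert \sec \vert \leq K$ supplies both $\Ric_g \geq -(n-1)K$ and $\vert \R^{E} \vert \leq r(n,K)$ is the same reduction the paper implicitly uses. The only genuine difference is the final step: you propose to re-run the developing-map construction (lift to the universal cover, build a map into a simply connected nilpotent group, identify the deck group with a cocompact lattice), whereas the paper disposes of this step in one line by citing Ghanaat's theorem, which asserts precisely that a global orthonormal frame $(e_1,\ldots,e_n)$ on $TM$ with $\Norm{\nabla e_i}{\infty}$ below a dimensional threshold $\tilde{\eps}(n)$ forces $M$ to be diffeomorphic to a nilmanifold. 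Your sketch is plausible but effectively re-derives Ghanaat's result inside the proof; citing it directly is both shorter and avoids the convergence/compactness subtleties you flag as the ``main obstacle.'' Also, your aside that parallelizability rules out a genuinely infranil quotient is not needed and not obviously true in general; Ghanaat's theorem already lands on a nilmanifold, so no such exclusion argument is required.
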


\begin{proof}
By \cite[Theorem 1.2]{Ghanaat} there is a positive $\tilde{\eps}(n)$ such that a global orthonormal frame $\onframe$ on $TM$ with $\Norm{\nabla e_i}{\infty} \leq \tilde{\eps}(n)$ implies that $M$ is diffeomorphic to a nilmanifold. 

Considering Corollary \ref{EigenvalueONframe} we see that there is a positive $\eps = \eps(n,K,d)$ such that $\lambda_n \leq \eps$ implies that there is such  a global orthonormal frame $\onframe$ on $TM$ satisfying  $\Norm{\nabla e_i}{\infty} \leq \tilde{\eps}(n)$. Thus, the claim follows.
\end{proof}

As a further application, our main theorem leads to a version of \cite[Theorem 2]{ballmann} with different constants. This will give us a lower bound on the spectrum which in addition depends on the holonomy of the Riemannian vector bundle. In contrast to the lower bound given in \cite[Theorem 2]{ballmann}, the dependence on the holonomy here is only polynomial For this, we repeat the following definition from \cite[page 658]{ballmann} for a Riemannian vector bundle $E$ over a compact manifold $M$: Let $c$ be a non-constant loop in $M$. $L(c)$ denotes the length of $c$ and $H_c(.)$ the holonomy along it. Then for each $p \in M$ and unit vector $v \in E_x$ we define $\beta(v)$ to be the supremum of $\frac{\vert H_c(v) - v \vert}{L(c)}$ taken over all non-constant loops $c$ starting in $p$. Setting $\beta \coloneqq \inf \lbrace \beta(v) \vert v \in E, \vert v \vert = 1 \rbrace$ we state the following bound, which is also valid for $\beta = 0$.

\begin{thm}
Assume the same situation as in the main theorem. Then there are positive constants $a= a(n)$ and $c_1 = c_1(n \sqrt{K}d)$ such that any eigenvalue $\lambda$ of $\laplace$ satisfies
\begin{align*}
\lambda \geq \min \Bigg\lbrace 1, \, \beta^{\frac{2n+4(1-\eps)}{n\eps}}  a \exp \Big( - c_1 d \Big( 1+ \sqrt{(n-1)K + n^2(r + r^2)}  \Big)\Bigg\rbrace.
\end{align*}
The constants $a=a(n)$, $\eps = \eps(n)$ and $c_1=c_1(n,\sqrt{K}d)$ are determined explicitly.
\end{thm}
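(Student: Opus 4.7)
The plan is to combine the main theorem with the standard fact that the holonomy of a section is controlled by $\Norm{\nabla S}{\infty}$. Concretely, for a loop $c : [0, L(c)] \to M$ based at $p$, denote by $P_t$ the parallel transport $E_{c(t)} \to E_p$ along $c|_{[0,t]}$, and set $\tilde S(t) \coloneqq P_t(S(c(t)))$. Then $\tilde S'(t) = P_t(\con{\dot c(t)} S)$, so $\vert \tilde S'(t) \vert \leq \vert \dot c(t) \vert \cdot \vert \nabla S \vert (c(t))$, and integration produces $\vert H_c(S(p)) - S(p) \vert \leq L(c)\, \Norm{\nabla S}{\infty}$. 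Applied to the unit vector $v = S(p)/\vert S(p) \vert$, linearity of $H_c$ gives $\vert H_c(v) - v \vert / L(c) \leq \Norm{\nabla S}{\infty}/\vert S(p) \vert$ for every loop, hence $\beta(v) \leq \Norm{\nabla S}{\infty}/\vert S(p) \vert$; letting $p$ tend to a maximum point of $\vert S \vert$ yields the key inequality $\beta \Norm{S}{\infty} \leq \Norm{\nabla S}{\infty}$.

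For an eigensection $\laplace S = \lambda S$, the main theorem gives $\Norm{\nabla S}{\infty} \leq \lambda^{\alpha(n)} \exp(A \sqrt{2\lambda})\, B\, \Norm{S}{2}$, and the volume-normalized norms satisfy $\Norm{S}{2} \leq \Norm{S}{\infty}$; substituting and canceling $\Norm{S}{\infty}$ produces the compact inequality $\beta \leq \lambda^{\alpha(n)} \exp(A \sqrt{2\lambda})\, B$.

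A case split concludes the argument. If $\lambda \geq 1$, the asserted minimum equals $1$ and there is nothing to prove. If $\lambda < 1$, then $\alpha(n) = \frac{\eps n}{2n + 4(1-\eps)}$ and $\exp(A\sqrt{2\lambda}) \leq \exp(A\sqrt{2})$, so solving for $\lambda$ yields $\lambda \geq \beta^{(2n+4(1-\eps))/(n\eps)}\, \bigl(e^{A\sqrt{2}} B\bigr)^{-(2n+4(1-\eps))/(n\eps)}$. Using the explicit form $B = a(n)\, e^{b(n,K,d,r)}$, the exponent $A\sqrt{2} + b$ factors as $\sqrt{2}\,\frac{(n+2)(n+1)}{n\eps}\, c(n,\sqrt{K}d)\, d \, \bigl(1 + \sqrt{(n-1)K + n^2(r+r^2)}\bigr)$, and raising to $(2n+4(1-\eps))/(n\eps)$ supplies the explicit $c_1(n,\sqrt{K}d)$ inside the exponential; the prefactor $a(n)^{-(2n+4(1-\eps))/(n\eps)}$ becomes the $a = a(n)$ of the statement.

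The only genuinely new ingredient beyond the main theorem is the parallel-transport inequality $\beta \Norm{S}{\infty} \leq \Norm{\nabla S}{\infty}$, which replaces the role Ballmann, Br\"uning, and Carron's holonomy estimate played in \cite{ballmann}; the rest is algebra with the explicit constants of the main theorem, and the sole subtlety is correctly tracking which factors contribute to $a$ and which pass into $c_1$.
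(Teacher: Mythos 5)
Your proposal is correct and follows essentially the same route as the paper: combine the holonomy inequality $\beta\,\Norm{S}{\infty} \leq \Norm{\nabla S}{\infty}$ with the main theorem to get $\beta \leq \lambda^{\alpha(n)} \exp(A\sqrt{2\lambda}) B$, then solve for $\lambda$ in the case $\lambda < 1$. The only difference is cosmetic: you re-derive the holonomy bound from scratch via parallel transport, whereas the paper simply cites it from \cite[Theorem 5]{ballmann} (in the slightly tighter form $\beta\,\Norm{S}{2} \leq \Norm{\nabla S}{\infty}$, which lets one cancel $\Norm{S}{2}$ directly without the extra step $\Norm{S}{2} \leq \Norm{S}{\infty}$).
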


\begin{proof}

Ballmann, Br\"uning and Carron showed in the proof of \cite[Theorem 5]{ballmann} that
\begin{equation}\label{Holonomy}
\beta \Norm{S}{2} \leq \beta \Norm{S}{\infty} \leq \Norm{\nabla S}{\infty}.
\end{equation}
for any nontrivial $L^2$-section $S$.

Now let $S$ be an $L^2$-eigensection with eigenvalue $\lambda$. Combining \eqref{Holonomy} with our main theorem, we obtain
\begin{align*}
\beta \leq \lambda^{\alpha(n)} \exp\left(A(n,K,d,r) \sqrt{2 \lambda}\right) B(n,K,d,r),
\end{align*}
where $\alpha(n) = \frac{1}{2}$ if $\lambda \geq 1$ and $\alpha(n) = \frac{\eps n}{2n + 4(1-\eps)}$ otherwise  with $\eps$ as in the proof of the main theorem. Thus
\begin{align*}
\lambda \geq \left( \frac{\beta}{B(n,K,d,r)} \right)^\frac{1}{\alpha(n)} \exp \left(- \frac{A(n,K,d,r)}{\alpha(n)} \sqrt{2\lambda} \right)
\end{align*}

Assuming $\lambda < 1$ we have $\alpha(n) = \frac{\eps n}{2n + 4(1-\eps)}$. Writing out the constants explicitly the statement follows.
\end{proof}

\bibliography{literature.bib}

\providecommand{\bysame}{\leavevmode\hbox to3em{\hrulefill}\thinspace}
\providecommand{\MR}{\relax\ifhmode\unskip\space\fi MR }
\providecommand{\MRhref}[2]{%
  \href{http://www.ams.org/mathscinet-getitem?mr=#1}{#2}
}
\providecommand{\href}[2]{#2}
\begin{thebibliography}{ACGR03}

\bibitem[Abr88]{Abresch}
Uwe Abresch, \emph{{\"U}ber das {G}l\"atten {R}iemann'scher {M}etriken},
  Habilitationsschrift, {R}heinische {F}riedrich-{W}ilhelms-{U}niversit\"at
  {B}onn, 1988.

\bibitem[ACGR03]{Colbois}
Erwann Aubry, Bruno Colbois, Patrick Ghanaat, and Ernst~A. Ruh,
  \emph{Curvature, {H}arnack's inequality, and a spectral characterization of
  nilmanifolds}, Ann. Global Anal. Geom. \textbf{23} (2003), no.~3, 227--246.

\bibitem[AS07a]{ammann}
Bernd Ammann and Chad Sprouse, \emph{Manifolds with small {D}irac eigenvalues
  are nilmanifolds}, Ann. Global Anal. Geom. \textbf{31} (2007), no.~4,
  409--425.

\bibitem[AS07b]{ammannerratum}
\bysame, \emph{Small erratum to `{M}anifolds with small {D}irac eigenvalues are
  nilmanifolds'}, 2007.

\bibitem[BBC03]{ballmann}
Werner Ballmann, Jochen Br{\"u}ning, and Gilles Carron, \emph{Eigenvalues and
  holonomy}, Int. Math. Res. Not. (2003), no.~12, 657--665.

\bibitem[CFG92]{CheegerFukuyaGromov}
Jeff Cheeger, Kenji Fukaya, and Mikhael Gromov, \emph{Nilpotent structures and
  invariant metrics on collapsed manifolds}, J. Amer. Math. Soc. \textbf{5}
  (1992), no.~2, 327--372.

\bibitem[CGH00]{Kato}
David M.~J. Calderbank, Paul Gauduchon, and Marc Herzlich, \emph{On the {K}ato
  inequality in {R}iemannian geometry}, Global analysis and harmonic analysis
  ({M}arseille-{L}uminy, 1999), S\'emin. Congr., vol.~4, Soc. Math. France,
  Paris, 2000, pp.~95--113.

\bibitem[Gal88]{gallot}
Sylvestre Gallot, \emph{Isoperimetric inequalities based on integral norms of
  {R}icci curvature}, Ast\'erisque (1988), no.~157-158, 191--216, Colloque Paul
  L{\'e}vy sur les Processus Stochastiques (Palaiseau, 1987).

\bibitem[Gha89]{Ghanaat}
Patrick Ghanaat, \emph{Almost {L}ie groups of type {${\bf R}^n$}}, J. Reine
  Angew. Math. \textbf{401} (1989), 60--81.

\bibitem[GT83]{GilbargTrudinger}
David Gilbarg and Neil~S. Trudinger, \emph{Elliptic partial differential
  equations of second order}, second ed., Grundlehren der Mathematischen
  Wissenschaften [Fundamental Principles of Mathematical Sciences], vol. 224,
  Springer-Verlag, Berlin, 1983.

\bibitem[LCR94]{divergenz}
Michel Le~Couturier and Gilles~F. Robert, \emph{{$L^p$}-pinching and the
  geometry of compact {R}iemannian manifolds}, Comment. Math. Helv. \textbf{69}
  (1994), no.~2, 249--271.

\bibitem[MOR90]{Ruh}
Maung Min-Oo and Ernst~A. Ruh, \emph{{$L^2$}-curvature pinching}, Comment.
  Math. Helv. \textbf{65} (1990), no.~1, 36--51.

\bibitem[PS99]{petersensprouse}
Peter Petersen and Chadwick Sprouse, \emph{Eigenvalue pinching for {R}iemannian
  vector bundles}, J. Reine Angew. Math. \textbf{511} (1999), 73--86.

\bibitem[PS03]{petersenerratum}
\bysame, \emph{Erratum to ``{E}igenvalue pinching for {R}iemannian vector
  bundles''}, Preprint, 2003.

\end{thebibliography}
\bibliographystyle{amsalpha}
\end{document}